\numberwithin{equation}{section}
\title{``Predator and prey'' model revisited - influence of external fluxes and noise.\thanks{Authors are grateful to Catherine Buryachenko and to Vitaliy Slynko for fruitful discussions. Authors appreciate the assistance of Sergei Abakumov who double-checked some numeric results. Work was supported by the Ministry of Education and Science of Ukraine (project 0118U003861)}}
\author{Yaroslav Huriev, Andriy Gusak}
\date{}
\newcommand{\keywords}{\textbf{Key words. }}
\newcommand{\subjclass}{\textbf{MSC 2010. }}
\renewcommand{\abstract}{\textbf{Abstract. }}
\newtheorem{theorem}{Theorem}
\begin{document}
	\maketitle
	\begin{abstract}
		Well-known predator-prey model is modified in two ways. First, regular adding or regular deleting of preys or/and predators is considered. Steady-state and stability diagram is found.  Second, random fluctuations of the birthrate and other kinetic coefficients are studied – parabolic law of random walk in (X,Y)-space is found and proved for small deviations from steady-state.
	\end{abstract}\medskip
	\subjclass{34A34, 34F05, 34D08}	\medskip\\
	\keywords{Lotka-Volterra model, nonlinear differential equations, stability analysis, noise}
	\medskip
	\section{Introduction}
	Predator-prey model, introduced by Lotka and Volterra, is a basic synergetic model demonstrating oscillatory behavior of the nonlinear biological, chemical or economic systems \cite{lit1,lit2,lit3}. It is governed by the simple set of two nonlinear equations taking into account the natural birthrate $ \overline{k}_{1} $  of preys, natural death rate $ \overline{k}_{4} $ of predators, as well as  ``collisions'' of preys with predators, unlucky for preys (rate $ \overline{k}_{2}$) and lucky for feeding the new generations of predators (rate $\bar{k}_{3}$ ):
	\begin{align}
	&\frac{d\bar{X}}{d\bar{t}}=\bar{k}_1\bar{X}-\bar{k}_2\bar{X}\bar{Y}\nonumber\\
	&\frac{d\bar{Y}}{d\bar{t}}=\bar{k}_3\bar{X}\bar{Y}-\bar{k}_4\bar{Y}
	\end{align}
	(X - number of preys, Y - number of predators, ). If the ``CREATOR'' of this ecosystem, choosing the initial numbers of both species,``misses'' the stationary numbers $ \bar{X}^{st}=\bar{k}_4/\bar{k}_3,\bar{Y}^{st}=\bar{k}_1/\bar{k}_2 $, system demonstrates the oscillatory behavior, all oscillations proceeding around the mentioned stationary point. Transition to non-dimensional variables,  $X=\bar{X}/\bar{X}^{st},Y=\bar{Y}/\bar{Y}^{st},t=\sqrt{\bar{k}_1\bar{k}_4}\bar{t}$ ,provides the set of two universal equations:
	\begin{align}
	&\frac{1}{a}\frac{dX}{dt}={k}_1{X}-{k}_2{X}\cdot{Y}\nonumber\\
	&a\frac{d{Y}}{d{t}}={k}_3{X}\cdot{Y}-{k}_4{Y}
	\end{align}
	with $a=\sqrt{\bar{k}_1/\bar{k}_4},{k}_1=1,{k}_2=1,{k}_3=1,{k}_4=1$. This model is very idealized and almost closed (excluding unlimited feed for preys). It is a main reason why the phase trajectories in the standard Lotka-Volterra model neither converge nor diverge, but just oscillate. Mathematically, it means that the linearized set of equations for deviations,
	\begin{align}
	&\frac{d\delta{X}}{dt}=a\cdot((1-Y^{st})\delta{X}-X^{st}\cdot \delta{Y})=0\cdot{\delta{X}}+(-a)\cdot\delta{Y}\nonumber\\
	&\frac{d{Y}}{d{t}}=\frac{1}{a}(Y^{st}\delta{X}+(X^{st}-1)\cdot\delta{Y})=\frac{1}{a}\cdot\delta{X}+0\cdot\delta{Y}
	\end{align}
	provides the purely imaginary (with zero real part) roots of the characteristic equation:
	$\det 
	\begin{Vmatrix}
		-\lambda& -a\\
		1/a& -\lambda
	\end{Vmatrix}=0\Rightarrow\lambda^{2}=-1\Rightarrow\lambda=\pm{i}$, which means oscillations without divergence or convergence.
	
	Everywhere below we will limit ourselves to the particular, most symmetric case $a=1$.
	
	There exist a lot of modifications and generalizations of predator-prey model \cite{lit4,lit5,lit6,lit7,lit8}, including non-homogeneity of the system and account of diffusion, several types of preys or/and predators, noise (fluctuations) of preys or/and predators numbers. In this paper, we suggest two more ways of modifications, which seem natural:
	\begin{enumerate}
	\item We will introduce sponsors/hunters of predators or/and preys with ``license'' for constant rate of sponsoring/hunting (regular external fluxes).
	\item We will introduce noise of kinetic coefficients.
	\end{enumerate}
	
	We will see that incorporation of adding or deleting predators or/and preys substantially broadens the spectrum of possible regimes: (I) system may remain eternally oscillating without convergence neither divergence, as in the classic LV-model, (II) system can be stable and converge to the steady-state limit, (III) system can be metastable, converging to steady-state from the initial positions in some critical vicinity of stationary solution and diverging from the positions outside this critical region, (IV) totally unstable, always diverging system.
	
	We will also see that the noise of kinetic coefficients, in average, leads to divergence, but the time law for the growth of mean squared distance from the stationary solution is peculiar and resembles Brownian motion.
	
	\section{Influence of regular adding/hunting}
	The basic equations for LV-system (Lotka-Volterra system) with external fluxes are:
	\begin{align}\label{eq4}
	&\frac{dX}{dt}={X}-{X}\cdot{Y}+b_{x}\nonumber\\
	&\frac{d{Y}}{d{t}}={X}\cdot{Y}-{Y}+b_{y}
	\end{align}
	We will start from some simple examples. Regular adding preys without predators being touched ($b_{x}>0,b_{y}=0$) stabilizes the system (Fig.\ref{image1}a), regular hunting preys without predators being touched ($b_{x}<0,b_{y}=0$) destabilizes the system (Fig.\ref{image1}b):
	\begin{figure}[H]
		\subfigure[$b_{x}=0.1,b_{y}=0$]{\includegraphics[width=9cm,height=6cm]{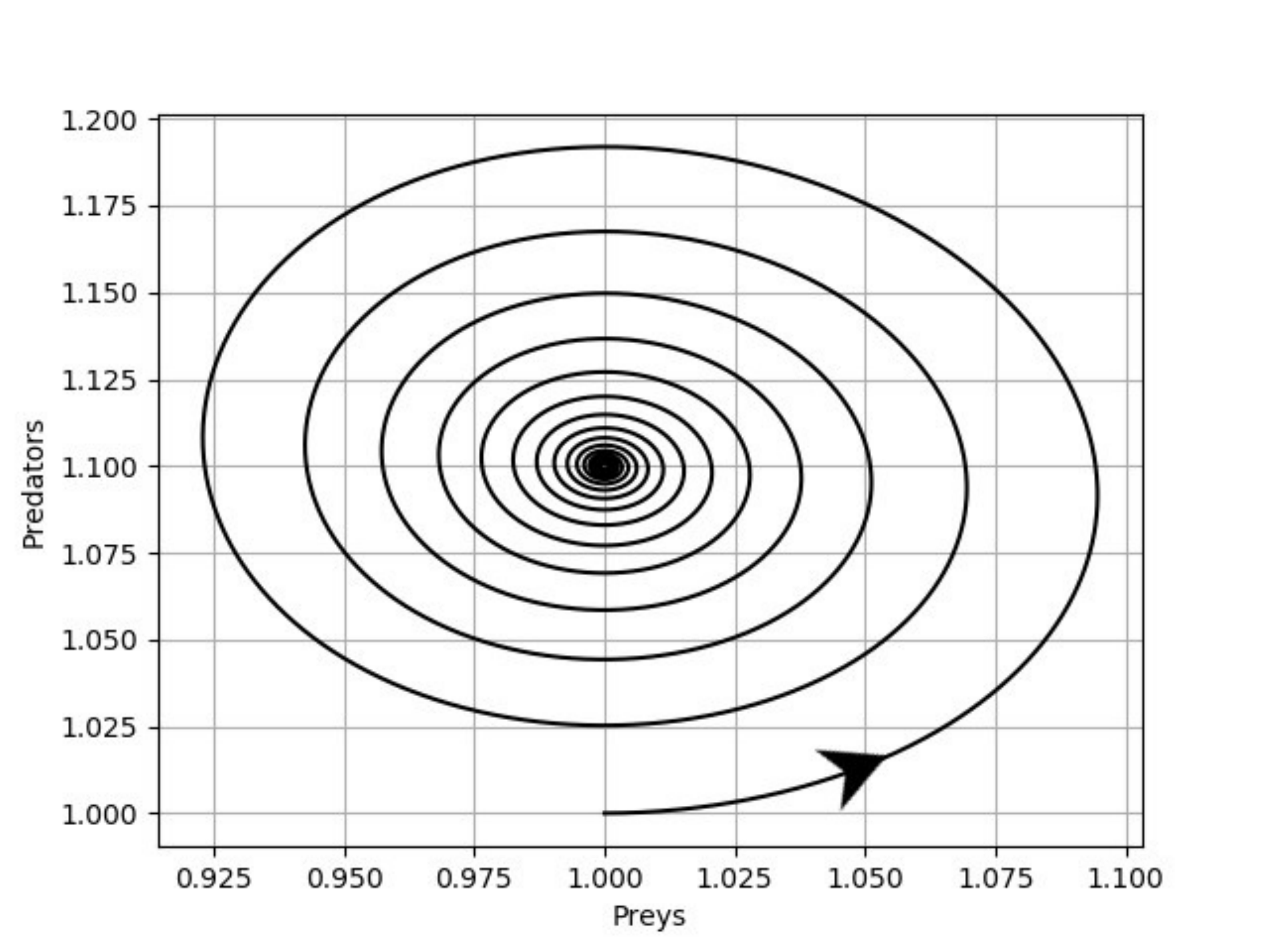}}
		\hfill
		\subfigure[$b_{x}=-0.1,b_{y}=0$]{\includegraphics[width=9cm,height=6cm]{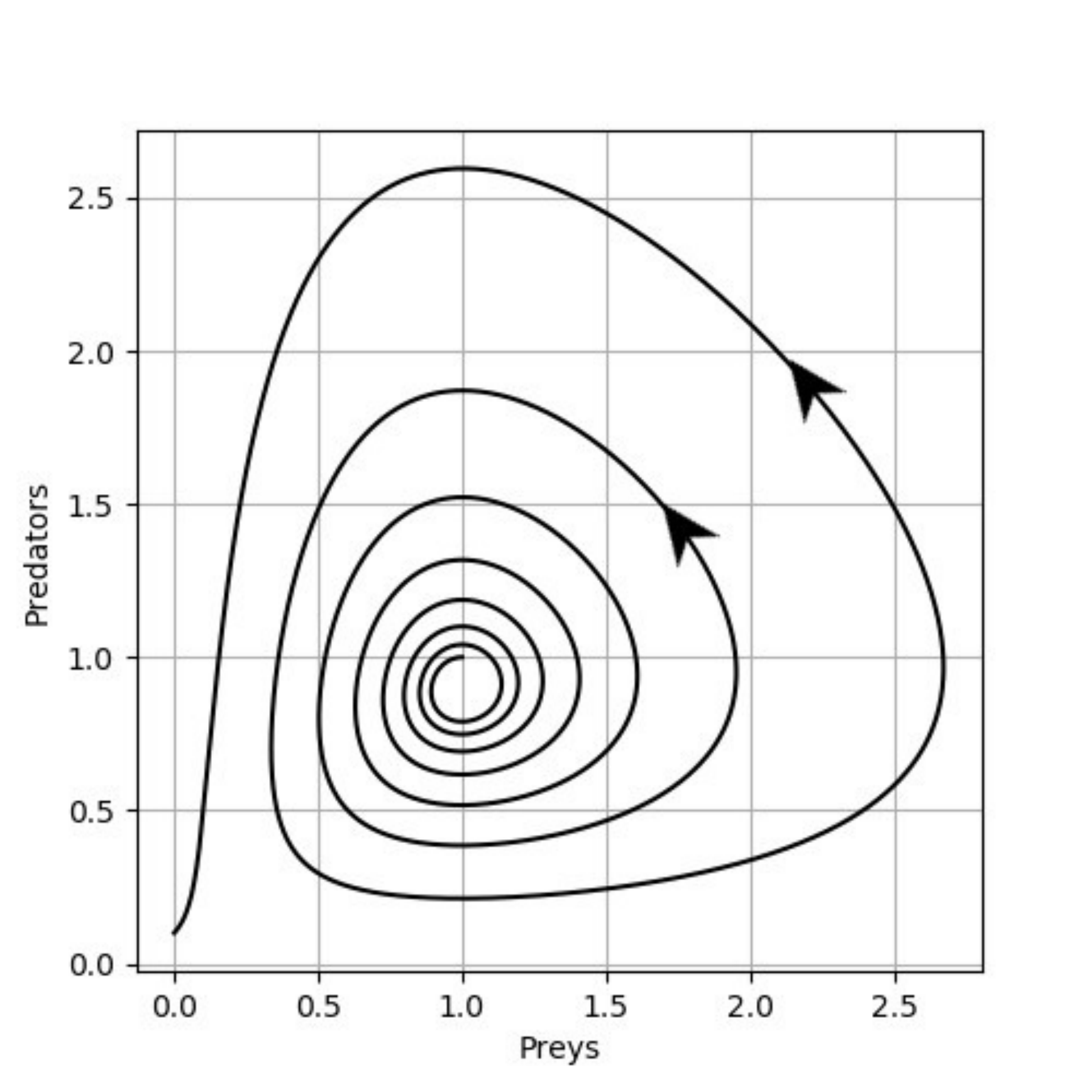}}
		\caption{Phase trajectories for cases of adding and hunting of preys.}
		\label{image1}
	\end{figure}
	
	In cases of adding/hunting only predators without preys being touched, we have analogic situation (Fig.\ref{image2}), but adding of predators stabilizes the system only for $0<b_{y}<1$.
	\begin{figure}[H]
		\subfigure[$b_{x}=0,b_{y}=0.1$]{\includegraphics[width=9cm,height=9cm]{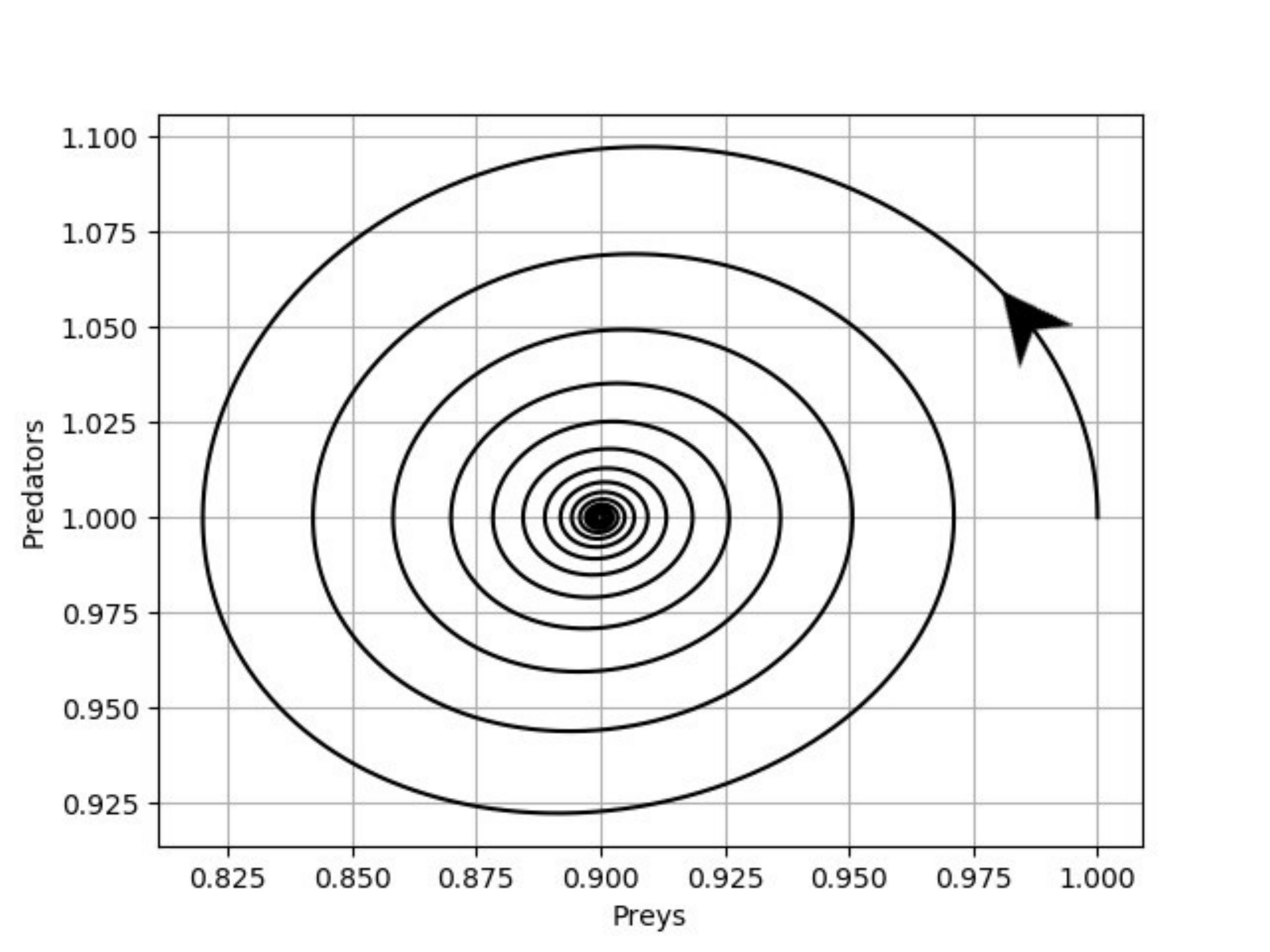}}
		\hfill
		\subfigure[$b_{x}=0,b_{y}=-0.1$]{\includegraphics[width=9cm,height=9cm]{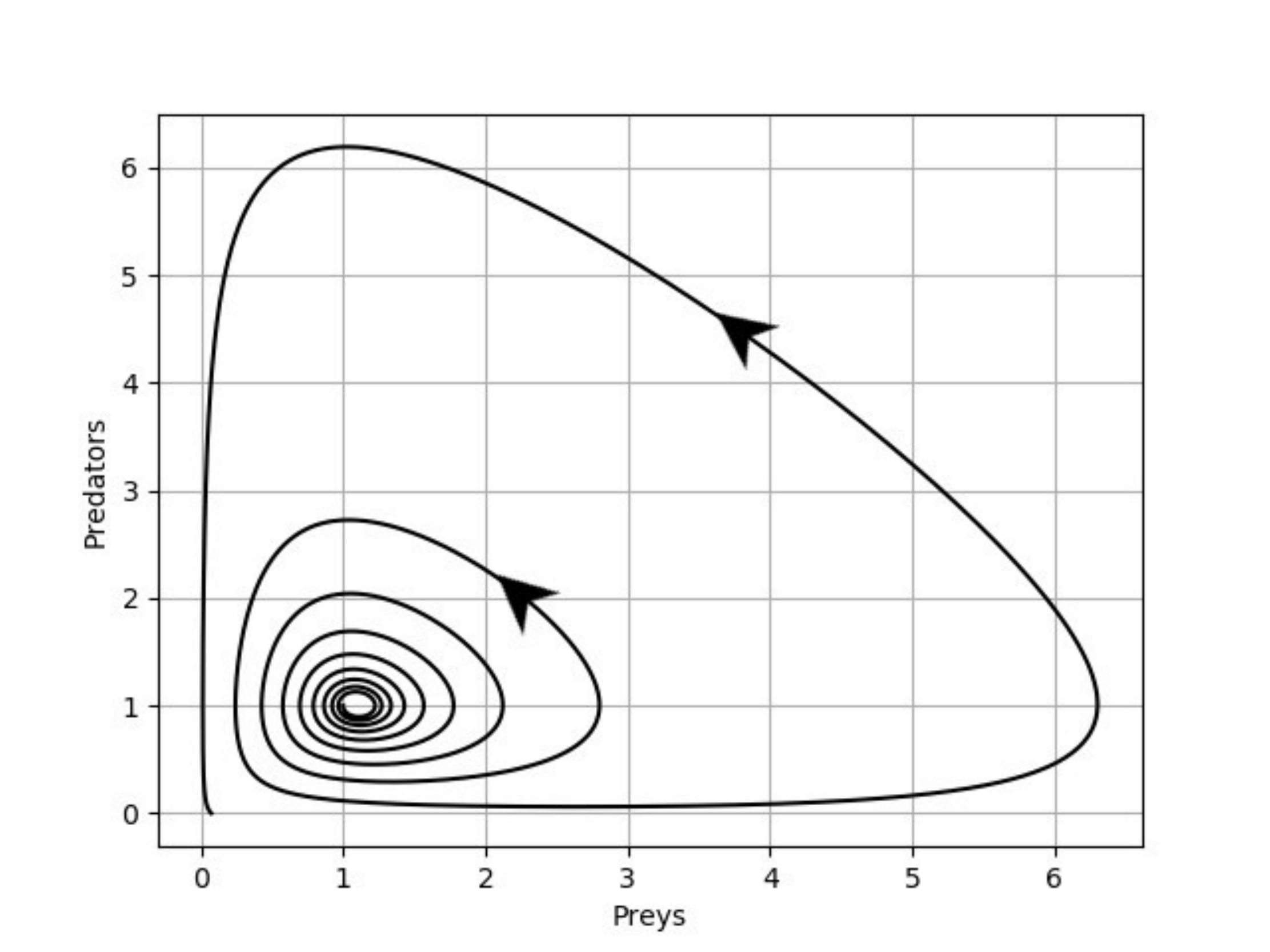}}
		\caption{Phase trajectories for cases of adding and hunting of predators.}
		\label{image2}
	\end{figure}
	
	Now we will consider the general case. Equations for stationary points give the formal solution
	\begin{eqnarray}\label{eq5}
	X^{st}=\frac{1-b_{x}-b_{y}}{2}+\sqrt{\frac{(1-b_{x}-b_{y})^{2}}{4}+b_{x}}, Y^{st}=\frac{1+b_{x}+b_{y}}{2}+\nonumber\\
	+\sqrt{\frac{(1-b_{x}-b_{y})^{2}}{4}+b_{x}}
	\end{eqnarray}
	
	Of course, only positive solutions ($X^{st}>0,Y^{st}>0$) should be further considered. Moreover, these solutions should be at least locally stable. To find the criteria of local stability, we linearize Eq. \eqref{eq4} in the vicinity of stationary solution determined by Eq. \eqref{eq5}:
	\begin{align}
	&\frac{d\delta{X}}{dt}=(1-Y^{st})\delta{X}-X^{st}\cdot\delta{Y}\nonumber\\
	&\frac{dY}{dt}=Y^{st}\delta{X}+(X^{st}-1)\cdot\delta{Y}
	\end{align}
	
	Then the local stability is determined by the real parts of the roots of characteristic equation
	\begin{eqnarray}
	\det 
	\begin{Vmatrix}
	1-Y^{st}-\lambda& -X^{st}\\
	Y^{st}& X^{st}-1-\lambda
	\end{Vmatrix}=0\Rightarrow\lambda^{2}-(X^{st}-Y^{st})\lambda+X^{st}+\nonumber\\
	+Y^{st}-1=0,\nonumber
	\end{eqnarray}
	
	\begin{align}
		&\nonumber\lambda_{1}=\frac{1}{2}((X^{st}-Y^{st})+\sqrt{(X^{st}-Y^{st})^{2}+4(1-X^{st}-Y^{st})})\\
		&\lambda_{2}=\frac{1}{2}((X^{st}-Y^{st})-\sqrt{(X^{st}-Y^{st})^{2}+4(1-X^{st}-Y^{st})})
		\label{eq2.4}
	\end{align}

	The result of steady-state analysis is summarized at Fig.\ref{image3}.
	\begin{figure}[H]
		\center{\includegraphics[width=12cm,height=7cm]{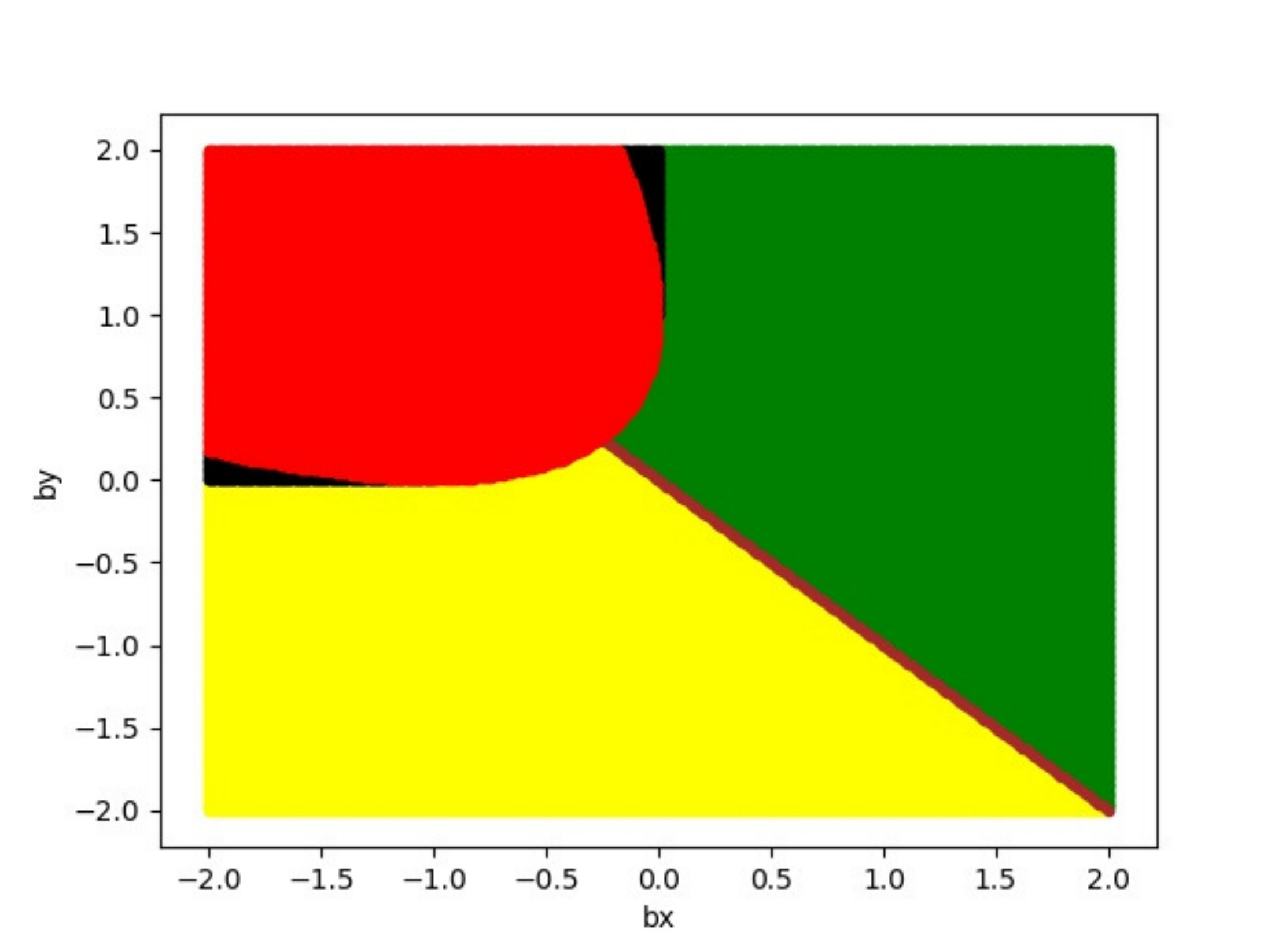}}
		\caption{Diagram ($b_{x},b_{y}$) of regimes in the open Lotka-Volterra system.\\
		Red - negative discriminant,$\frac{(1-b_{x}-b_{y})^{2}}{4}+b_{x}<0$ - no stationary solutions at all.\\
		Black - negative or zero values of $X^{st}$ or $Y^{st}$ - non-physical stationary solutions.\\
		Yellow - physical stationary solutions, but unstable (at least one of the real parts of $\lambda$ is positive or zero).\\
		Brown - oscillatory behavior without convergence neither divergence (zero stability)\\
		Green - locally stable stationary solutions.}
		\label{image3}
	\end{figure}
	
	In general, we should distinguish globally stable and locally stable (metastable) stationary solutions. It will be done elsewhere.
	
	Two examples are demonstrated at Fig. \ref{image4}(a, b). Namely, we demonstrate the convergence region (green) at X-Y plane around the metastable stationary points corresponding to following cases: (a)$b_{x}=1.1,b_{y}=-1.09,X^{st}=1.65,Y^{st}=1.66$, (b)$b_{x}=-0.1,b_{y}=0.3,X^{st}=0.64,Y^{st}=0.84$
	\begin{figure}[H]
		\subfigure[$b_{x}=1.1,b_{y}=-1.09,X^{st}=1.65,Y^{st}=1.66$]{\includegraphics[width=9cm,height=6cm]{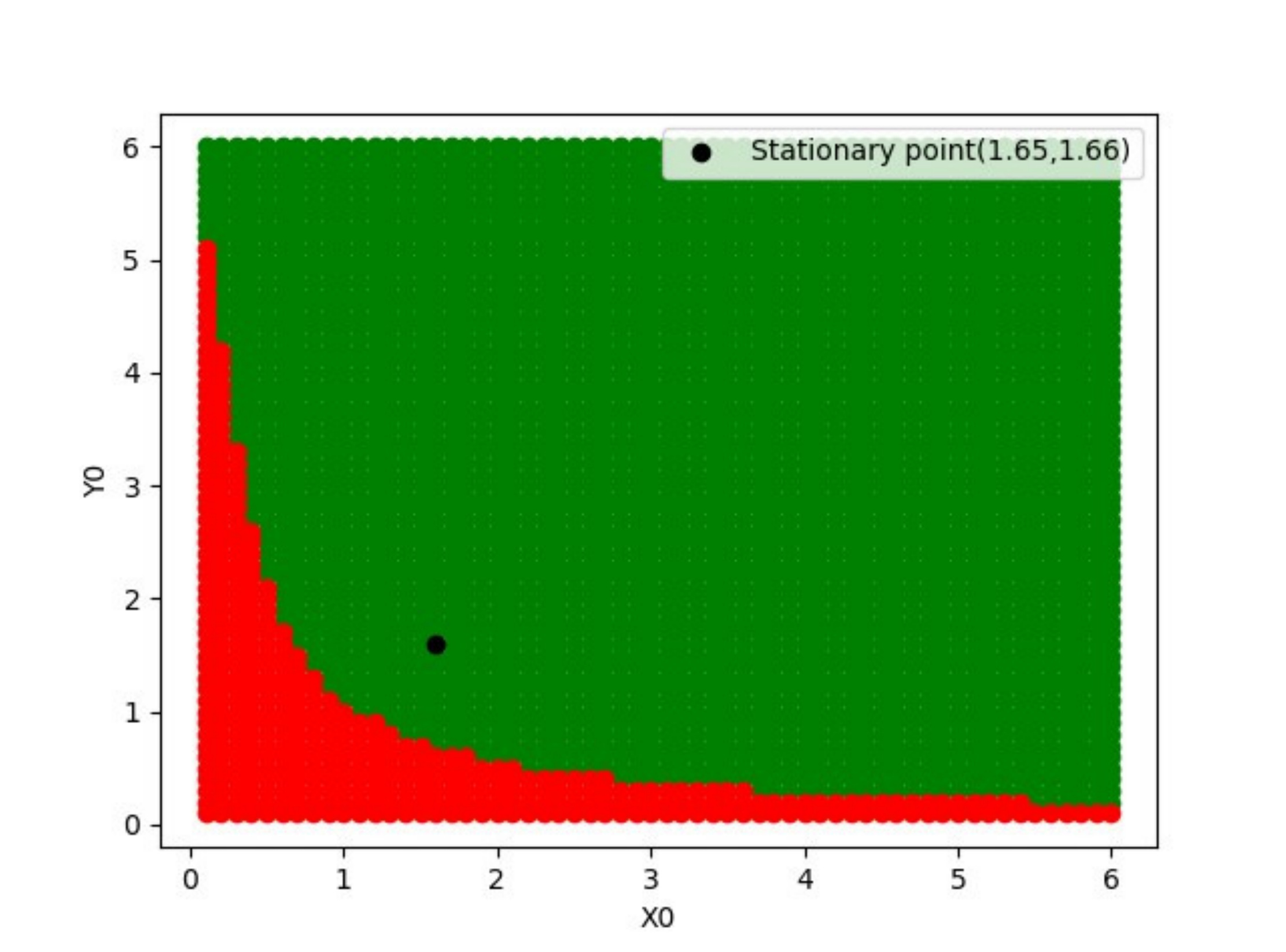}}
		\hfill
		\subfigure[$b_{x}=-0.1,b_{y}=0.3,X^{st}=0.64,Y^{st}=0.84$]{\includegraphics[width=9cm,height=6cm]{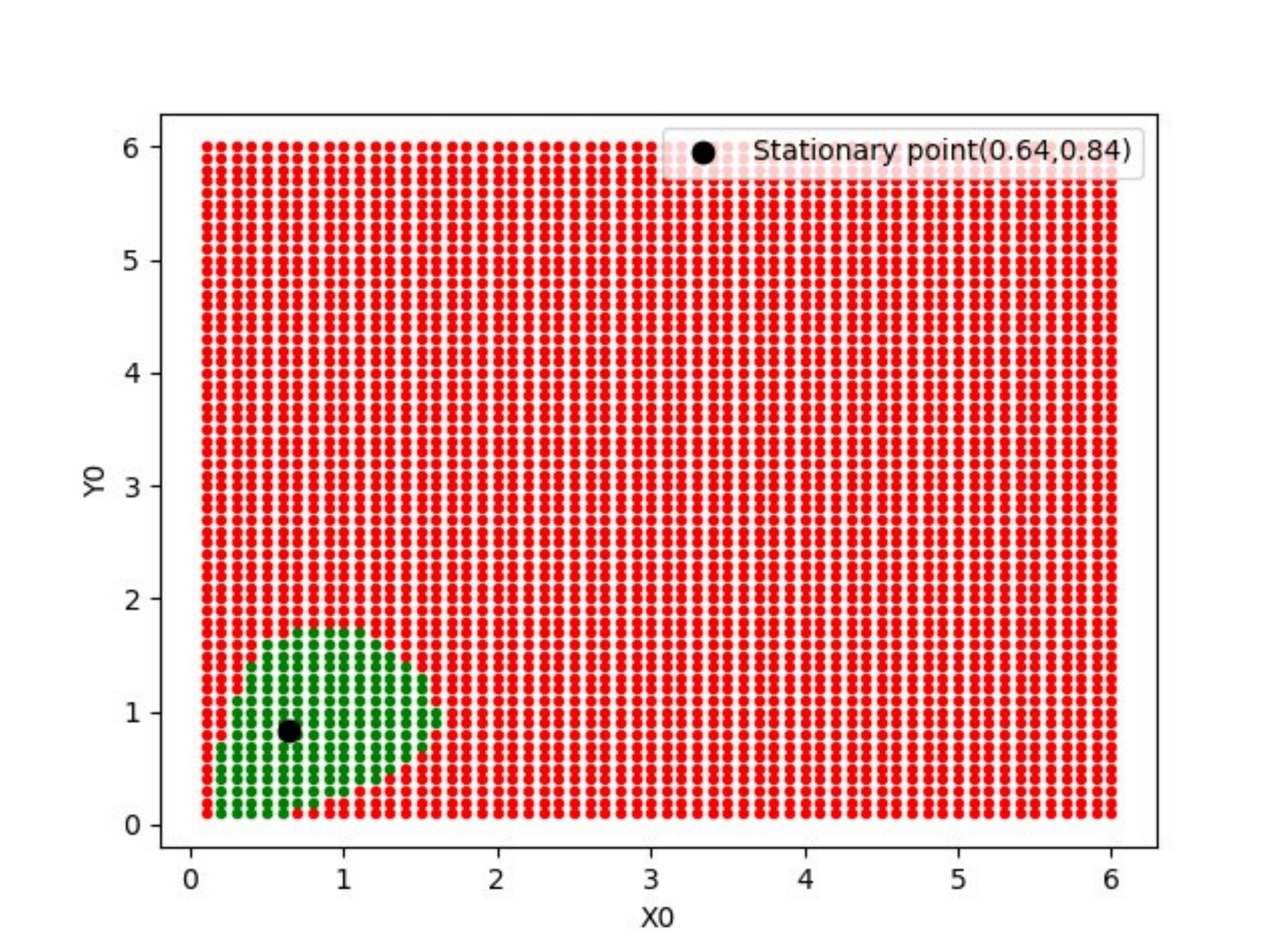}}
		\caption{Stability/collapse diagram
		(stationary point is marked with black spot, convergence region is green)}
		\label{image4}
	\end{figure}

	\section{Noise of kinetic coefficients}
	
	Influence of noise on the behavior of the LV system is not a new problem. Yet, as far as we know, only noise of X and Y had been explored - random adding or deleting of small number of preys or predators\cite{lit4,lit7}. Additionally of this, in our paper we explore the random fluctuations of kinetic coefficients $k_{1}, k_{2}, k_{3}, k_{4}$ (for example, fluctuation of birthrate due to rainy days).
	
	We will start from the Langevine noise of reduced birthrate without memory and with fixed amplitude $A$:
	\begin{align}
	&k_{1}=1+\xi(t)\nonumber\\
	&<\xi(t)\xi(t^{\prime})>=A^{2}\delta(t-t^{\prime})
	\end{align}
	 In case of numeric modeling, one should introduce this noise in such a way that the change of the time step should not change the impact of noise. Our suggestion was a step-wise probability distribution:
	\begin{align}\tag{3.2a}
	k_{1}=1+\frac{A}{\sqrt{dt}}\sqrt{3}(2\cdot{random}-1),
	\end{align}
	
	(Mean square of random function $\sqrt{3}(2\cdot{random}-1)$  is equal to 1). Alternatively, one may use the Gaussian distribution
	\begin{align}\tag{3.2b}
	k_{1}=1+\frac{A}{\sqrt{dt}}\sin ({2\pi{random}})\sqrt{2\ln ({1/random})}
	\end{align}.

	In more details, the introduction of the noise of kinetic coefficients is discussed in \cite{lit8,lit9} for the case of atomic migration.
	We started from stationary point ($X=1, Y=1$) as an initial condition. Typical phase trajectory as a numeric solution of the set
	\begin{align}
	&\nonumber\frac{dX}{dt}=(1+\xi(t))X-X\cdot{Y}\\
	&\tag{3.3}\frac{dY}{dt}=X\cdot{Y}-Y
	\end{align}
	is shown at Fig.\ref{image5}.
	\begin{figure}[H]
		\centering
		\includegraphics[width=8cm,height=8cm]{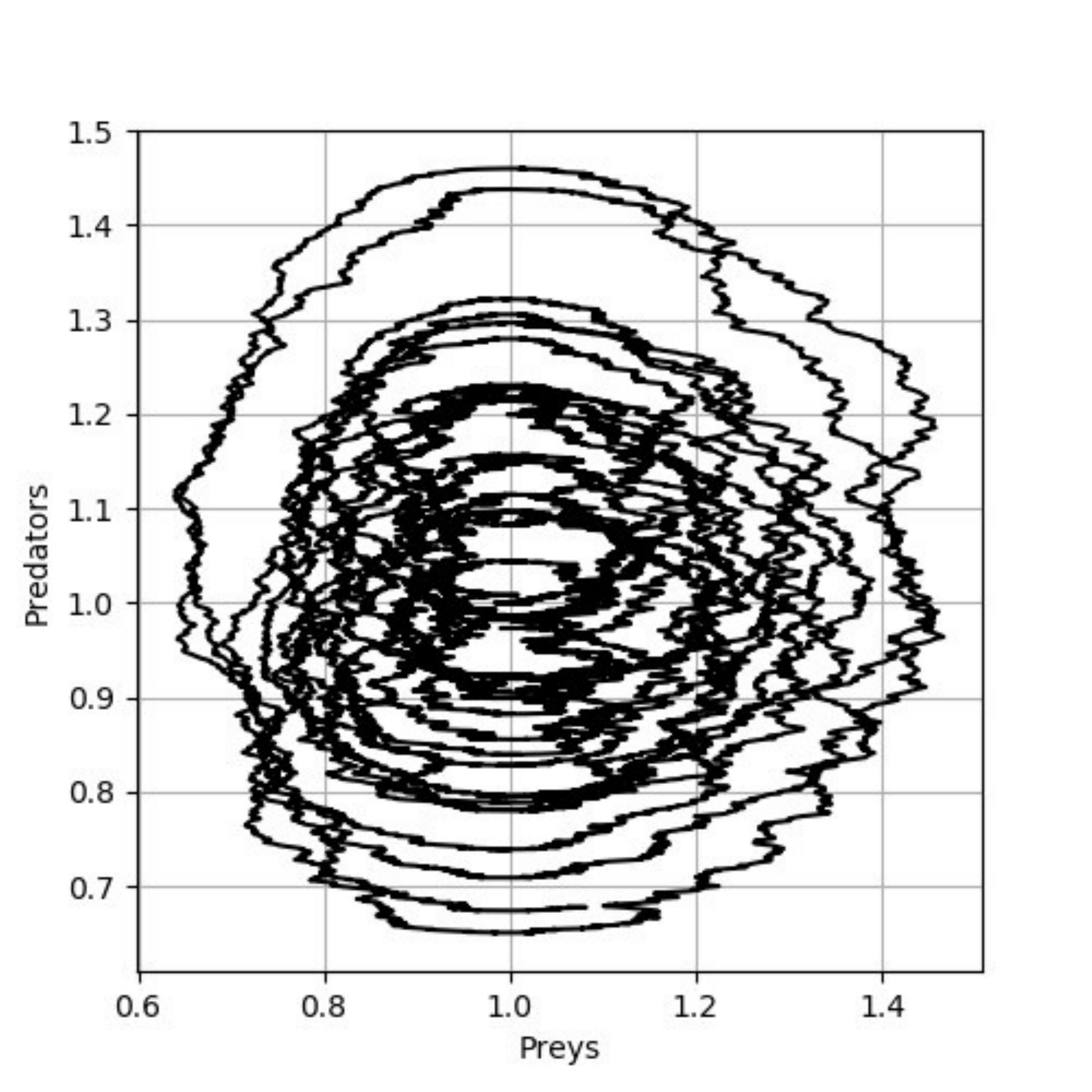}
		\caption{Typical phase trajectory of LV-system under fluctuating birthrate, starting from stationary point.}
		\label{image5}
	\end{figure}

	Then, we took ensemble of $M=100$ LV-systems, originating at the same stationary point (1,1), and found the mean squared displacement from this point as a function of time. The results for different time-steps and the same amplitude are shown at Fig.\ref{image6}.
	\begin{figure}[H]
		\centering
		\includegraphics[width=8cm,height=8cm]{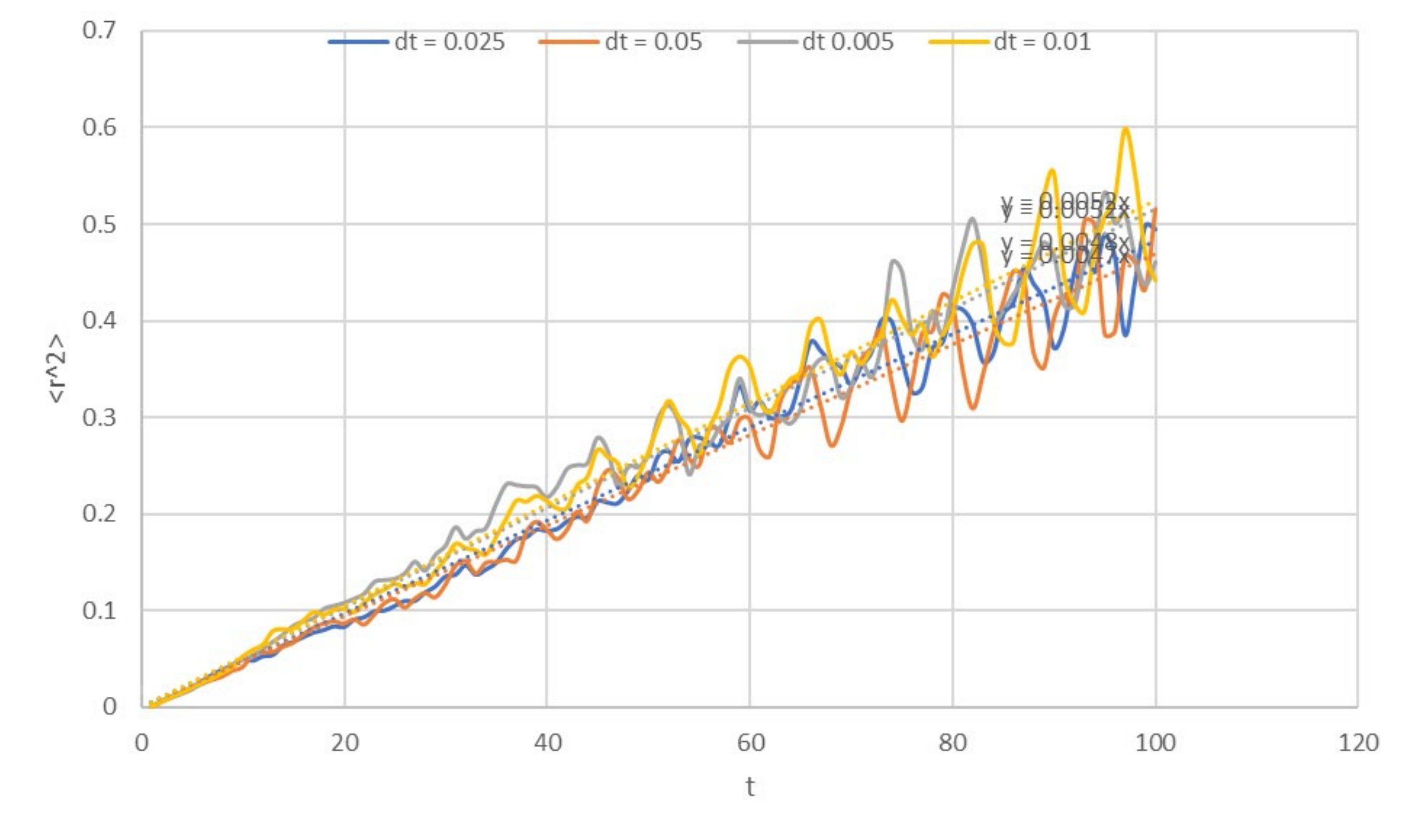}
		\caption{Noise of the birthrate k1. Mean squared displacement from stationary point versus time for the same amplitude A=0.07 and various time-step dt. In all cases trajectory starts from the stationary points}
		\label{image6}
	\end{figure}
	
	One can see that dependencies for different time-steps are close to each other and can be approximated as
	\begin{align}
	\tag{3.4}<(\Delta{X})^{2}+(\Delta{Y})^{2}>\approx{\alpha}A^{2}t, \alpha\approx1
	\label{eq3.4}
	\end{align}

	If the initial point differs from the stationary point, then the initial mean squared displacement at first decreases and then follows the same time law - see Fig.\ref{image7}.
	
	\begin{figure}[H]
		\centering
		\includegraphics[width=12cm,height=6cm]{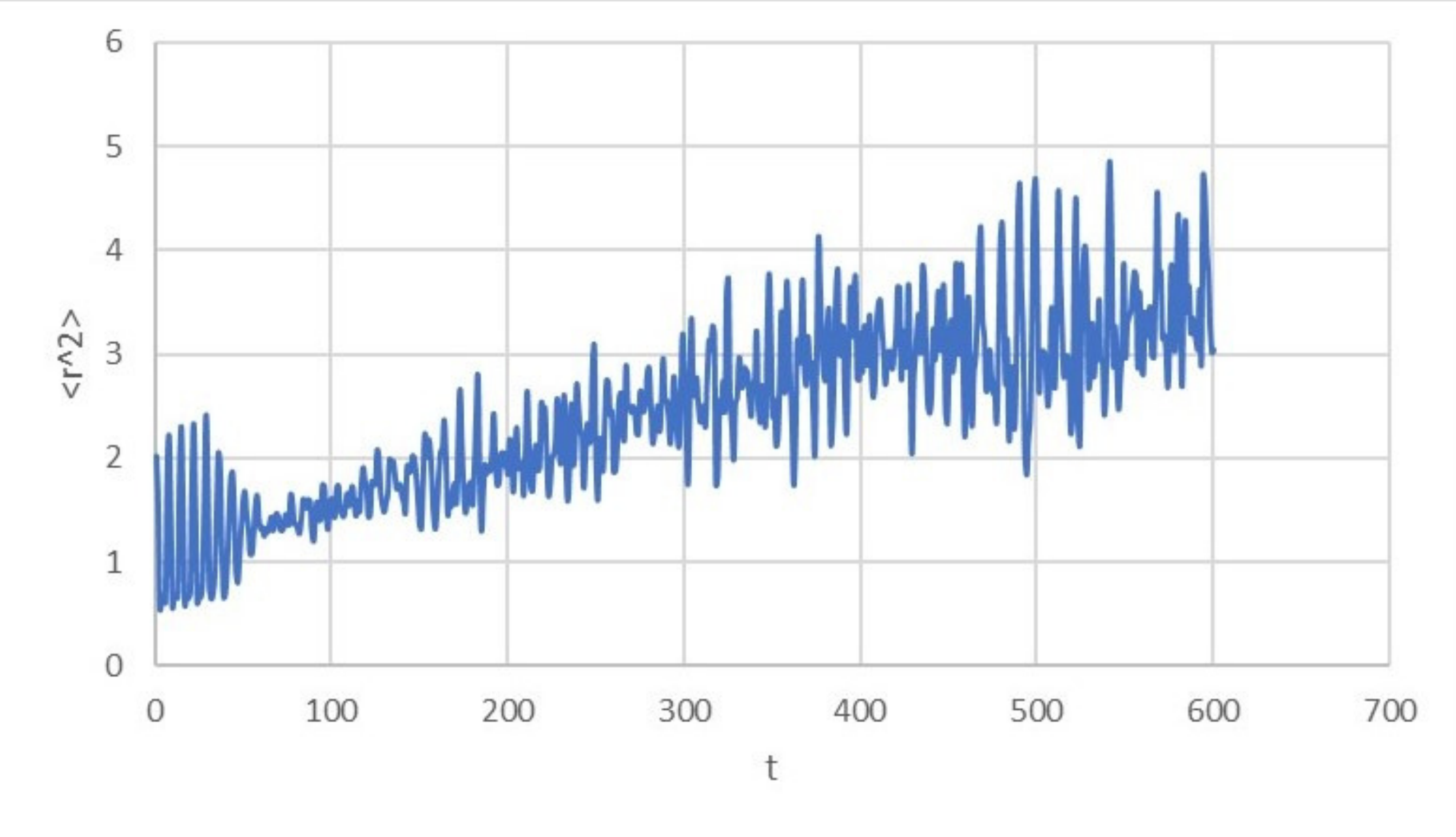}
		\caption{Noise of the birthrate k1.Mean squared displacement from stationary point versus time, with initial deviation from stationary point. ($A=0.07,dt=0.05,X_{0}=2,Y_{0}=2$)}
		\label{image7}
	\end{figure}
	
	Analogic dependencies were obtained by numeric simulations for the noise of $k_{2}, k_{3}, k_{4}$.
	
	Most probably parabolic dependence is related to ``zero stability'' of LV system, so that we observe something like random walk in XY space.
	
	We can prove Eq. (\ref{eq3.4}) analytically and find exact value of $\alpha$ at least for small deviations from steady-state using linearized kinetic equations.
	
	\begin{theorem}\label{th1}
		Linearization (first-order approximation) of the  Lotka-Volterra model in the vicinity of steady-state with the Langevin noise of the birthrate of preys and without external fluxes,
		\begin{align}
		\frac{dX}{dt}=(1+\xi(t))X-X\cdot{Y},\frac{dY}{dt}=X\cdot{Y}-Y,<\xi(t)\xi(t^{\prime})>=\nonumber\\
		=A^{2}\delta(t-t^{\prime}),\tag{3.5}
		\end{align}
		provides the following parabolic law for the sum of dispersions for the ensemble of LV-systems:
		\begin{align}
		<(\delta{X})^{2}>+<(\delta{Y})^{2}>=1\cdot{A}^{2}t\tag{3.6}
		\label{eq3.6}
		\end{align}
	\end{theorem}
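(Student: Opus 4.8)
The plan is to reduce the stochastic dynamics near the fixed point to a noise-driven linear system and then to track the single quantity $\langle(\delta X)^{2}\rangle+\langle(\delta Y)^{2}\rangle$ directly, without solving for individual trajectories. First I would linearise (3.5) about the steady state $X^{st}=Y^{st}=1$. Writing
\[
X=1+\delta X,\qquad Y=1+\delta Y
\]
and keeping only first-order terms, the multiplicative noise $\xi X=\xi(1+\delta X)$ collapses to an \emph{additive} noise $\xi(t)$ at leading order, since $\xi\,\delta X$ is of second order. The linearised system then reads
\begin{align}
\frac{d\,\delta X}{dt}=-\,\delta Y+\xi(t),\qquad \frac{d\,\delta Y}{dt}=\delta X. \tag{$\ast$}
\end{align}
This is precisely the zero-stability rotation of the unforced model (the $a=1$ linearisation of the Introduction), with the birthrate noise injected into the first component alone.

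Next I would differentiate the target quantity using $(\ast)$:
\begin{align}
\frac{d}{dt}\langle(\delta X)^{2}\rangle&=2\big\langle \delta X\,(-\delta Y+\xi)\big\rangle=-2\langle \delta X\,\delta Y\rangle+2\langle \delta X\,\xi\rangle,\nonumber\\
\frac{d}{dt}\langle(\delta Y)^{2}\rangle&=2\langle \delta Y\,\delta X\rangle .\nonumber
\end{align}
Adding these, the cross-correlations $\mp 2\langle \delta X\,\delta Y\rangle$ cancel exactly — a direct consequence of the Hamiltonian (purely rotational) structure of the deterministic part — leaving
\[
\frac{d}{dt}\big(\langle(\delta X)^{2}\rangle+\langle(\delta Y)^{2}\rangle\big)=2\,\langle \delta X(t)\,\xi(t)\rangle .
\]

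The crux is the equal-time correlation $\langle \delta X(t)\,\xi(t)\rangle$, and this is the step I expect to require the most care: a careless treatment either drops it entirely (predicting a constant mean-square displacement, contradicting the simulations) or mishandles its numerical coefficient. I would evaluate it by integrating the first line of $(\ast)$,
\[
\delta X(t)=\delta X(0)+\int_{0}^{t}\big(-\delta Y(s)+\xi(s)\big)\,ds,
\]
and multiplying by $\xi(t)$. Causality forces $\langle \delta Y(s)\,\xi(t)\rangle=0$ for $s\le t$, since the deviation at time $s$ cannot depend on later noise, while the white-noise autocorrelation contributes $\int_{0}^{t}A^{2}\delta(s-t)\,ds=\tfrac12 A^{2}$, the factor $\tfrac12$ arising because the delta spike sits at the upper endpoint of integration. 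Hence $\langle \delta X\,\xi\rangle=\tfrac12 A^{2}$. Equivalently, in the It\^o picture one sets $\xi\,dt=A\,dW$ and applies It\^o's formula to $(\delta X)^{2}+(\delta Y)^{2}$: the quadratic-variation term $(d\,\delta X)^{2}=A^{2}\,dt$ produces exactly the same $A^{2}$, and the cross terms again cancel.

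Substituting back gives
\[
\frac{d}{dt}\big(\langle(\delta X)^{2}\rangle+\langle(\delta Y)^{2}\rangle\big)=A^{2},
\]
and, with the ensemble initiated at the steady state so that $\delta X(0)=\delta Y(0)=0$, integration yields the asserted parabolic law $\langle(\delta X)^{2}\rangle+\langle(\delta Y)^{2}\rangle=A^{2}t$, i.e.\ $\alpha=1$, matching the numerical slope of Eq.~\eqref{eq3.4}. I would note that the initial transient dip seen in Fig.~\ref{image7} for a nonzero starting deviation lies beyond this first-order description, since the linearised flow conserves the deterministic radius and therefore predicts a strictly monotone, purely additive growth of the sum of dispersions.
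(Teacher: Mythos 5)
Your proposal is correct and follows essentially the same route as the paper: linearize about $(1,1)$, form the second-moment equations so that the cross terms $\pm 2\langle\delta X\,\delta Y\rangle$ cancel, and evaluate the equal-time correlation $\langle\delta X(t)\xi(t)\rangle = A^{2}/2$, with the factor $1/2$ arising because the delta function sits at the endpoint of the integration interval, then integrate to get $A^{2}t$. The only cosmetic differences are that you compute this correlation by directly integrating the $\delta X$ equation and invoking causality, whereas the paper writes the formal matrix-exponential solution $\int_{0}^{t}\exp\bigl((t-t')\hat M\bigr)\,(\xi(t'),0)^{T}\,dt'$ (the two computations coincide since the exponential reduces to the identity at $t'=t$), and your It\^o-formula cross-check is an independent confirmation not present in the paper.
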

	\begin{proof}
		Consider the first order approximation for the deviations from steady-state:
		$X=1+\delta{X},Y=1+\delta{Y},|\delta{X}|<<1,|\delta{Y}|<<1,$
	\begin{align}
	&\frac{d\delta{X}}{dt}=\xi(t)-\delta{Y}\tag{3.7a}\label{eq3.7a}\\
	&\frac{d\delta{Y}}{dt}=\delta{X}\tag{3.7b}\label{eq3.7b}
	\end{align}
	We consider an ensemble of LV-systems with identical initial condition\[\delta{X}(t=0)=0,\delta{Y}(t=0)=0\]
	Let us multiply Eq. (\ref{eq3.7a}) by $2\delta{X}$ and average over ensemble:
	\begin{align}
	\frac{d<(\delta{X})^{2}>}{dt}=-2<\delta{X}\delta{Y}>+2<\delta{X}(t)\xi(t)>\tag{3.8a}\label{eq3.8a}
	\end{align}
	Let us multiply Eq. (\ref{eq3.7b}) by $2\delta{Y}$ and average over ensemble:
	\begin{align}
	\frac{d<(\delta{Y})^{2}>}{dt}=2<\delta{X}\delta{Y}>\tag{3.8b}\label{eq3.8b}
	\end{align}
	Now we add Eqs (\ref{eq3.8a})+(\ref{eq3.8b}). It gives
	\begin{align}
	\frac{d}{dt}(<(\delta{X})^{2}>+<(\delta{Y})^{2}>)=2<\delta{X}(t)\xi(t)>\tag{3.9}
	\end{align}
	
	To make the set (\ref{eq3.8a}, \ref{eq3.8b}) self-consistent, one should find the value of $<\delta{X}(t)\xi(t)>$ .
	
	For this, first of all, we find the formal solution of set (\ref{eq3.7a}, \ref{eq3.7b}) which we reformulate in the matrix form:
	\begin{align}
	&\frac{d}{dt}
	\begin{pmatrix}
	\delta{X}\\
	\delta{Y}
	\end{pmatrix}=
	\begin{pmatrix}
	0& -1\\
	1& 0
	\end{pmatrix}
	\begin{pmatrix}
	\delta{X}\\
	\delta{Y}
	\end{pmatrix}+
	\begin{pmatrix}
	\xi\\
	0
	\end{pmatrix}
	\equiv
	\hat{M}
	\begin{pmatrix}
	\delta{X}\\
	\delta{Y}
	\end{pmatrix}+\hat{f},\nonumber\\
	&\delta{X}(t=0)=0,\delta{Y}(t=0)=0\tag{3.10}
	\label{eq3.10}
	\end{align}
	
	Solution of Eq. (\ref{eq3.10}) is:
	
	\begin{align}
	\begin{pmatrix}
	\delta{X}(t)\\
	\delta{Y}(t)
	\end{pmatrix}=
	\int_{0}^{t}\exp ((t-t^{\prime})\hat{M})
	\begin{pmatrix}
	\xi(t^{\prime})\\
	0
	\end{pmatrix}
	dt^{\prime}\tag{3.11}
	\end{align}
	
	Then we can find two mean values $<\delta{X}(t)\xi(t)>$ and $<\delta{Y}(t)\xi(t)>$.
	
	\begin{align}
	&<\begin{pmatrix}
	\delta{X}(t)\\
	\delta{Y}(t)
	\end{pmatrix}\xi(t)>=
	\begin{pmatrix}
	<\delta{X}(t)\xi(t)>\\
	<\delta{Y}(t)\xi(t)>
	\end{pmatrix}=
	\int_{0}^{t}\exp ((t-t^{\prime})\hat{M})\cdot\nonumber\\
	&\cdot<
	\begin{pmatrix}
	\xi(t^{\prime})\\
	0
	\end{pmatrix}
	\xi(t)>dt^{\prime}=\nonumber
	\int_{0}^{t}\exp ((t-t^{\prime})\hat{M})
	\begin{pmatrix}
	A^{2}\delta(t-t^{\prime})\\
	0
	\end{pmatrix}
	dt^{\prime}=\\
	&=\begin{pmatrix}
	A^{2}/2\\
	0
	\end{pmatrix}\tag{3.12}
	\end{align}
	
	(Note that the factor $1/2$ appeared because the argument of delta function $\delta(t-t^{\prime})$ is equal zero not within the integration interval but instead at it’s boundary.) Thus,
	\begin{align}
	&<\xi(t)\delta{X}(t)>=A^{2}/2,\nonumber\\
	&\frac{d}{dt}(<(\delta{X})^{2}>+<(\delta{Y})^{2}>)=A^{2}\Rightarrow<(\delta{X})^{2}>+<(\delta{Y})^{2}>=A^{2}t.\tag{3.13}
	\end{align}
	Theorem 1 is proved.
	\end{proof}
	Now we proceed to the noise of birthrate of predators.
	
	\begin{theorem}\label{th2}
		Linearization of the Lotka-Volterra model with the Langevin noise of the birthrate of predators and without external fluxes,
		\begin{align}
		\frac{dX}{dt}=X-X\cdot{Y},\frac{dY}{dt}=X\cdot{Y}-(1+\xi(t))Y,<\xi(t)\xi(t^{\prime})>=\nonumber\\
		=A^{2}\delta(t-t^{\prime}),\tag{3.14}
		\end{align}
		provides the parabolic law (\ref{eq3.6}) for the sum of dispersions.
	\end{theorem}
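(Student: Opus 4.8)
The plan is to follow the proof of Theorem~\ref{th1} step by step, since the structure is unchanged and only the channel through which $\xi$ enters the linearized system differs. First I would linearize (3.14) about the steady state $(X^{st},Y^{st})=(1,1)$, writing $X=1+\delta X$, $Y=1+\delta Y$ and discarding products of small quantities. Expanding the prey equation gives $\frac{d\,\delta X}{dt}=-\delta Y$, now \emph{without} a noise term, while the predator equation yields $\frac{d\,\delta Y}{dt}=\delta X-\xi(t)$. Hence the governing matrix $\hat M=\bigl(\begin{smallmatrix}0&-1\\1&0\end{smallmatrix}\bigr)$ is identical to that of Theorem~\ref{th1}, and only the forcing vector changes, from $\hat f=(\xi,\,0)^{T}$ to $\hat f=(0,\,-\xi)^{T}$.

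Next I would form the same energy-type identity: multiply the $\delta X$-equation by $2\delta X$, the $\delta Y$-equation by $2\delta Y$, average over the ensemble, and add. The cross terms $\pm 2\langle\delta X\,\delta Y\rangle$ cancel, leaving
\begin{align}
\frac{d}{dt}\bigl(\langle(\delta X)^2\rangle+\langle(\delta Y)^2\rangle\bigr)=-2\langle\delta Y(t)\,\xi(t)\rangle.\nonumber
\end{align}
Thus the problem again reduces to evaluating a single noise--state correlation, this time $\langle\delta Y(t)\,\xi(t)\rangle$ rather than $\langle\delta X(t)\,\xi(t)\rangle$.

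To close the system I would write the Duhamel solution $(\delta X,\delta Y)^{T}=\int_0^t e^{(t-t')\hat M}(0,-\xi(t'))^{T}\,dt'$, multiply by $\xi(t)$, average, and insert $\langle\xi(t')\xi(t)\rangle=A^2\delta(t-t')$. Because the delta function is supported at the right endpoint $t'=t$, the matrix exponential collapses to the identity and one recovers the same boundary factor $1/2$ already justified in Theorem~\ref{th1}, giving $\langle\delta Y(t)\,\xi(t)\rangle=-A^2/2$.

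Finally I would substitute back: the minus sign of the forcing and the minus sign in the identity conspire, $-2\cdot(-A^2/2)=A^2$, so that $\frac{d}{dt}(\langle(\delta X)^2\rangle+\langle(\delta Y)^2\rangle)=A^2$ and integration from the zero initial data reproduces the parabolic law (\ref{eq3.6}). The one delicate point --- where a misplaced sign would flip the result --- is this bookkeeping of the two minus signs together with the boundary $1/2$. It is worth noting, though, that the conclusion is sign-robust for a structural reason: since $\hat M$ is skew-symmetric the deterministic flow conserves $(\delta X)^2+(\delta Y)^2$, so the cross terms must cancel, and the noise feeds the mean square only through the \emph{square} of its coefficient in $\hat f$. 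For noise in any single kinetic coefficient exactly one component of $\hat f$ equals $\pm\xi$ while the other vanishes, so the injection rate is $A^2$ irrespective of sign --- which explains why the noise of $k_2,k_3,k_4$ obeys the same law found numerically.
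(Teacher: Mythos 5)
Your proposal is correct and follows essentially the same route as the paper: the same linearization (3.15a,b), the same energy identity reducing everything to $\langle\delta Y(t)\,\xi(t)\rangle$, and the same Duhamel/delta-function evaluation with the boundary factor $1/2$ yielding $-A^2/2$ and hence the law $A^2 t$. Your closing observation about sign-robustness via the skew-symmetry of $\hat M$ is a nice bonus --- it is exactly the symmetry the paper itself invokes later to dispatch Theorems \ref{th3} and \ref{th4} without repeating the computation.
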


	\begin{proof}
		Again, consider the first order approximation for the deviations from steady-state:$X=1+\delta{X},Y=1+\delta{Y},|\delta{X}|<<1,|\delta{Y}|<<1,$
		\begin{align}
		&\frac{d\delta{X}}{dt}=-\delta{Y}\tag{3.15a}\label{eq3.15a}\\
		&\frac{d\delta{Y}}{dt}=\delta{X}-\xi(t)\tag{3.15b}\label{eq3.15b}
		\end{align}
		We again consider an ensemble of LV-systems with identical initial condition\[\delta{X}(t=0)=0,\delta{Y}(t=0)=0\]
		Let us multiply Eq. (\ref{eq3.15a}) by $2\delta{X}$ and average over ensemble:
		\begin{align}
		\frac{d<(\delta{X})^{2}>}{dt}=-2<\delta{X}\delta{Y}>\tag{3.16a}\label{eq3.16a}
		\end{align}
		Let us multiply Eq. (\ref{eq3.15b}) by $2\delta{Y}$ and average over ensemble:
		\begin{align}
		\frac{d<(\delta{Y})^{2}>}{dt}=2<\delta{X}\delta{Y}>-2<\delta{Y}\cdot\xi>\tag{3.16b}
		\label{eq3.16b}
		\end{align}
		Now we add Eqs (\ref{eq3.16a})+(\ref{eq3.16b}). It gives
		\begin{align}
		\frac{d}{dt}(<(\delta{X})^{2}>+<(\delta{Y})^{2}>)=-2<\delta{Y}(t)\xi(t)>\tag{3.17}
		\end{align}
		Now, one should find the value of $<\delta{Y(t)\xi(t)>}$.
		
		For this, first of all, we find the formal solution of set (\ref{eq3.15a}, \ref{eq3.15b}) which we reformulate in the matrix form:
		\begin{align}
		&\frac{d}{dt}
		\begin{pmatrix}
		\delta{X}\\
		\delta{Y}
		\end{pmatrix}=
		\begin{pmatrix}
		0& -1\\
		1& 0
		\end{pmatrix}
		\begin{pmatrix}
		\delta{X}\\
		\delta{Y}
		\end{pmatrix}+
		\begin{pmatrix}
		0\\
		-\xi
		\end{pmatrix}
		\equiv
		\hat{M}
		\begin{pmatrix}
		\delta{X}\\
		\delta{Y}
		\end{pmatrix}+\hat{f},\nonumber\\
		&\delta{X}(t=0)=0,\delta{Y}(t=0)=0\tag{3.18}
		\label{eq3.18}
		\end{align}
		Solution of Eq. (\ref{eq3.18}) is:
		\begin{align}
		\begin{pmatrix}
		\delta{X}(t)\\
		\delta{Y}(t)
		\end{pmatrix}=
		\int_{0}^{t}\exp ((t-t^{\prime})\hat{M})
		\begin{pmatrix}
		0\\
		-\xi(t^{\prime})
		\end{pmatrix}
		dt^{\prime}\tag{3.19}
		\end{align}
		Then we can find two mean values$<\delta{X}(t)\xi(t)>$ and $ < \delta Y( t )\xi ( t ) > $ .
		
		\begin{align}
		\begin{array}{l}
		< \left( {\begin{array}{*{20}{c}}
			{\delta X\left( t \right)}\\
			{\delta Y\left( t \right)}
			\end{array}} \right)\xi \left( t \right) >  = \left( {\begin{array}{*{20}{c}}
			{ < \delta X\left( t \right)\xi \left( t \right) > }\\
			{ < \delta Y\left( t \right)\xi \left( t \right) > }
			\end{array}} \right) = \\
			=\int\limits_0^t {\exp \left( {\left( {t - t'} \right)\hat M} \right)}  < \left( {\begin{array}{*{20}{c}}
			0\\
			{ - \xi \left( {t'} \right)}
			\end{array}} \right)\xi \left( t \right) > dt' = \\
		= \int\limits_0^t {\exp \left( {\left( {t - t'} \right)\hat M} \right)} \left( {\begin{array}{*{20}{c}}
			0\\
			{ - {A^2}\delta \left( {t - t'} \right)}
			\end{array}} \right)dt' = \left( {\begin{array}{*{20}{c}}
			0\\
			{ - {A^2}/2}
			\end{array}} \right)
		\end{array}
		\tag{3.20}
		\end{align}
		Thus,
		\begin{align}
		 < \delta Y\left( t \right)\xi \left( t \right) >  =  - {A^2}/2
		 \tag{3.21}
		\end{align}
		\begin{align}
		\frac{d}{{dt}}\left( { < {{\left( {\delta X} \right)}^2} >  + {{<\left( {\delta Y} \right)}^2}}> \right) = {A^2} \Rightarrow  < {\left( {\delta X} \right)^2} >  + {<\left( {\delta Y} \right)^2}> = {A^2}t
		\nonumber
		\end{align}
		Theorem is proved.	
	\end{proof}
	Now we proceed to the noise of extinction rate of preys (caught by predators).
	\begin{theorem}\label{th3}
		Linearization of the Lotka-Volterra model with the Langevin noise of extinction rate of preys and without external fluxes,
		\begin{align}
		\frac{{dX}}{{dt}} = X - \left( {1 + \xi \left( t \right)} \right)X \cdot Y,\,\,\frac{{dY}}{{dt}} = X \cdot Y - Y,\,\, < \xi \left( t \right)\xi \left( {t'} \right) >=\nonumber\\
		= {A^2}\delta \left( {t - t'} \right)
		\tag{3.22}
		\end{align}
		provides the parabolic law (\ref{eq3.6}) for the sum of dispersions.
	\end{theorem}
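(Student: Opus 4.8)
The plan is to reuse, essentially verbatim, the three-step machinery that established Theorems 1 and 2: (i) linearize Eq.~(3.22) about the fixed point $(1,1)$; (ii) form the evolution equation for $\langle(\delta X)^2\rangle+\langle(\delta Y)^2\rangle$ by multiplying each linearized equation by twice its own variable, averaging over the ensemble, and adding, so that the cross term $\langle\delta X\,\delta Y\rangle$ cancels; and (iii) close the resulting equation by evaluating the single equal-time noise correlator through the Duhamel (variation-of-constants) solution and the white-noise covariance.

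First I would substitute $X=1+\delta X$, $Y=1+\delta Y$ and expand to first order, treating $\xi,\delta X,\delta Y$ as small and discarding every product of two of them. The key point is that at the fixed point $X^{st}=Y^{st}=X^{st}Y^{st}=1$, so in the term $(1+\xi)XY$ the product $XY$ reduces to $1$ and the multiplicative noise on the collision term becomes an additive, unit-strength term; the cross contributions $\xi\,\delta X$ and $\xi\,\delta Y$ are second order and drop out. This would give
\begin{align}
\frac{d\delta X}{dt}=-\delta Y-\xi,\qquad \frac{d\delta Y}{dt}=\delta X,\nonumber
\end{align}
which is exactly the linearized system (3.7a,b) of Theorem 1 with $\xi$ replaced by $-\xi$. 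Since only $\langle\xi\xi\rangle$ will matter, the sign is immaterial, and this structural coincidence already tells me the answer must again be $A^2t$.

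Carrying out step (ii) then yields $\frac{d}{dt}(\langle(\delta X)^2\rangle+\langle(\delta Y)^2\rangle)=-2\langle\delta X(t)\,\xi(t)\rangle$. For step (iii) I would write the system in the matrix form of Eq.~(3.10) with the same rotation generator $\hat M$ but with forcing whose only nonzero entry is $-\xi$ in the prey slot, invoke the solution (3.11), multiply by $\xi(t)$, and average. The covariance $\langle\xi(t')\xi(t)\rangle=A^2\delta(t-t')$ collapses the convolution, and because the delta function is centered at the upper endpoint $t'=t$ only half its weight falls inside $[0,t]$; together with $\exp(0\cdot\hat M)=\mathbb{I}$ this produces $\langle\delta X(t)\xi(t)\rangle=-A^2/2$, whence $\frac{d}{dt}(\cdots)=A^2$ and the parabolic law (3.6) follows by integration from the zero initial condition.

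I do not expect any genuine obstacle beyond the one already present in Theorem 1: the only delicate step is the boundary evaluation $\int_0^t\delta(t-t')\,dt'=1/2$, which supplies the crucial factor that forces $\alpha=1$. Everything else is a mechanical transcription of the Theorem 1 computation with a single sign flip, so I would present it tersely, emphasizing the reduction of multiplicative collision noise to effectively additive noise as the one conceptual novelty of this case.
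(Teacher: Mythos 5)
Your proposal is correct and follows essentially the same route as the paper: the paper linearizes to obtain $\frac{d\delta X}{dt}=-\xi(t)-\delta Y$, $\frac{d\delta Y}{dt}=\delta X$, observes these are Eqs.~(3.7a,\,3.7b) up to the sign of the noise term, and concludes by the sign-symmetry of the Langevin noise that Theorem~\ref{th3} follows from Theorem~\ref{th1}. Your explicit re-derivation of $\langle\delta X(t)\xi(t)\rangle=-A^{2}/2$ and $\frac{d}{dt}\bigl(\langle(\delta X)^{2}\rangle+\langle(\delta Y)^{2}\rangle\bigr)=A^{2}$ is a correct (if redundant) verification of exactly that reduction.
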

	\begin{proof}
		Again, consider the first order approximation for the deviations from steady-state: 
		$X = 1 + \delta X,\,\,Y = 1 + \delta Y,\,\,\,\left| {\delta X} \right| <  < 1,\,\,\left| {\delta Y} \right| <  < 1,$
		\begin{align}
		&\frac{{d\delta X}}{{dt}} =  - \xi \left( t \right) - \delta Y,\,\,\tag{3.23a}\label{eq3.23a}\\
		&\frac{{d\delta Y}}{{dt}} = \delta X
		\tag{3.23b}
		\label{eq3.23b}
		\end{align}
		Eqs. (\ref{eq3.23a}, \ref{eq3.23b}) are identical with eq. (\ref{eq3.7a}, \ref{eq3.7b}) except sign before noise term. All characteristics (probabilities and mean values) of Langevin noise are symmetrical in respect to sign. Therefore Theorem \ref{th3} directly follows from Theorem \ref{th1}.
	\end{proof}
	Now we proceed to the noise of reproduction rate of predators due to eating of preys.
	\begin{theorem}\label{th4}
		Linearization of the Lotka-Volterra model with the Langevin noise of reproduction rate of predators due to eating of preys and without external fluxes,
		\begin{align}
		\frac{{dX}}{{dt}} = X - X \cdot Y,\,\,\frac{{dY}}{{dt}} = \left( {1 + \xi \left( t \right)} \right)X \cdot Y - Y,\,\, < \xi \left( t \right)\xi \left( {t'} \right) >=\nonumber \\ 
		= {A^2}\delta \left( {t - t'} \right)
		\tag{3.24}
		\end{align}
		provides the parabolic law (\ref{eq3.6}) for the sum of dispersions.
	\end{theorem}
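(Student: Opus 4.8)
The plan is to linearize the system about the steady state and then to observe that the resulting equations coincide with those of Theorem~\ref{th2} up to the sign of the noise, so that Theorem~\ref{th4} follows by exactly the sign-symmetry argument used to pass from Theorem~\ref{th1} to Theorem~\ref{th3}. Concretely, I would put $X=1+\delta X$, $Y=1+\delta Y$ with $|\delta X|<<1$, $|\delta Y|<<1$. Since the steady state obeys $X^{st}Y^{st}=1$, the perturbed product expands as $(1+\xi)XY=XY+\xi XY$, and at first order $\xi XY\to\xi$ because the cross terms $\xi\,\delta X$ and $\xi\,\delta Y$ are of second order. Collecting first-order contributions gives the linearized set
\begin{align}
&\frac{d\delta X}{dt}=-\delta Y,\nonumber\\
&\frac{d\delta Y}{dt}=\delta X+\xi(t).\nonumber
\end{align}

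The key step is to recognize that this set is identical to the linearized system (\ref{eq3.15a}, \ref{eq3.15b}) of Theorem~\ref{th2}, the only difference being that the noise enters the second equation with a plus sign here instead of a minus sign. All statistical characteristics of the Langevin noise, in particular the correlation $<\xi(t)\xi(t')>=A^{2}\delta(t-t')$, are invariant under the reflection $\xi\to-\xi$. Consequently the ensemble dispersions $<(\delta X)^{2}>$ and $<(\delta Y)^{2}>$ generated by the two systems coincide, and their sum satisfies the parabolic law (\ref{eq3.6}), precisely as in the passage from Theorem~\ref{th1} to Theorem~\ref{th3}.

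For a self-contained derivation one may instead repeat the computation of Theorem~\ref{th2}: multiplying the two equations by $2\delta X$ and $2\delta Y$, averaging over the ensemble, and adding yields $\frac{d}{dt}(<(\delta X)^{2}>+<(\delta Y)^{2}>)=2<\delta Y(t)\xi(t)>$, now carrying a plus sign. Writing the matrix form as in (\ref{eq3.18}) but with forcing $\hat f=(0,\xi)^{T}$ and substituting the formal solution into the correlator gives $<\delta Y(t)\xi(t)>=+A^{2}/2$. The two sign reversals relative to Theorem~\ref{th2} cancel, so the right-hand side is again $A^{2}$ and (\ref{eq3.6}) follows. I do not anticipate any genuine obstacle here: the one delicate point, shared by all four theorems, is the factor $1/2$ that appears because the argument of $\delta(t-t')$ vanishes at the boundary $t'=t$ of the integration interval rather than in its interior.
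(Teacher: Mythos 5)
Your proposal is correct and follows essentially the same route as the paper: linearize about the steady state $(1,1)$ to obtain $\frac{d\delta X}{dt}=-\delta Y$, $\frac{d\delta Y}{dt}=\delta X+\xi(t)$, and then invoke the sign-symmetry of the Langevin noise to reduce to Theorem~\ref{th2}, exactly as the paper does. Your additional self-contained computation (showing $<\delta Y(t)\xi(t)>=+A^{2}/2$ so that the two sign reversals cancel) is a valid, slightly more explicit rendering of the same argument.
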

	\begin{proof}
		Again, consider the first order approximation for the deviations from steady-state: 
		$X = 1 + \delta X,\,\,Y = 1 + \delta Y,\,\,\,\left| {\delta X} \right| <  < 1,\,\,\left| {\delta Y} \right| <  < 1\,,$
		\begin{align}
		&\frac{{d\delta X}}{{dt}} =  - \delta Y,\,\,\tag{3.25a}\label{eq3.25a}\\
		&\frac{{d\delta Y}}{{dt}} = \delta X + \xi \left( t \right)
		\tag{3.25b}\label{eq3.25b}
		\end{align}
		Eqs. (\ref{eq3.25a}, \ref{eq3.25b}) are identical with eq. (\ref{eq3.15a}, \ref{eq3.15b}) except sign before noise term. All characteristics (probabilities and mean values) of Langevin noise are symmetrical in respect to sign. Therefore, Theorem \ref{th4} directly follows from Theorem \ref{th2}.
	\end{proof}
	Now we proceed to the noise in the LV-model with external fluxes – adding noise to various terms in eqs.(\ref{eq4}). We will start with the simplest cases – when the external flux is present but it’s mean value is zero.
	\begin{theorem}\label{th5}
		Linearization of the Lotka-Volterra model with the Langevin noise of external flux of preys,
		\begin{align}
		\frac{{dX}}{{dt}} = X - X \cdot Y + \xi \left( t \right),\,\,\frac{{dY}}{{dt}} = X \cdot Y - Y,\,\, < \xi \left( t \right)\xi \left( {t'} \right) >=\nonumber\\
		 = {A^2}\delta \left( {t - t'} \right)
		\tag{3.26}
		\end{align}
		provides the parabolic law (\ref{eq3.6}) for the sum of dispersions.
	\end{theorem}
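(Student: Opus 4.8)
The plan is to reuse the linearization strategy of Theorems 1--4 and then recognize the resulting linear stochastic system as one already analyzed. First I would write $X = 1 + \delta X$, $Y = 1 + \delta Y$ with $|\delta X| \ll 1$ and $|\delta Y| \ll 1$, substitute into the system (3.26), and keep only terms of first order in the deviations. The prey equation gives $\frac{d\delta X}{dt} = -\delta Y - \delta X\,\delta Y + \xi(t)$, which upon dropping the quadratic term becomes $\frac{d\delta X}{dt} = \xi(t) - \delta Y$; the predator equation reduces to $\frac{d\delta Y}{dt} = \delta X$.

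The key observation is that these linearized equations coincide exactly with (\ref{eq3.7a}, \ref{eq3.7b}) from Theorem \ref{th1}. This is structural rather than accidental: at the steady state the prey population satisfies $X^{st} = 1$, so the additive external-flux noise $\xi(t)$ of (3.26) and the multiplicative birthrate noise $\xi(t)X$ of Theorem \ref{th1} linearize to the same forcing, since $\xi X = \xi(1 + \delta X) \approx \xi$ to first order. Consequently the entire derivation of Theorem \ref{th1} carries over verbatim --- forming $\frac{d<(\delta X)^2>}{dt}$ and $\frac{d<(\delta Y)^2>}{dt}$, adding them to obtain $\frac{d}{dt}(<(\delta X)^2> + <(\delta Y)^2>) = 2<\delta X(t)\xi(t)>$, and evaluating $<\delta X(t)\xi(t)> = A^2/2$ through the Duhamel representation (3.11) together with the boundary factor $1/2$ arising because the delta function peaks at the endpoint of the integration interval rather than inside it.

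I would therefore simply invoke Theorem \ref{th1} to conclude that $<(\delta X)^2> + <(\delta Y)^2> = A^2 t$. There is no genuine obstacle beyond checking the linearization, because the reduction is exact; the one point worth stressing is the first-order coincidence of the two noise models, which is the precise reason the parabolic law is inherited. This parallels the way Theorem \ref{th3} was deduced from Theorem \ref{th1} and Theorem \ref{th4} from Theorem \ref{th2}, and it confirms that, at the level of the linearized dynamics, injecting zero-mean noise into the prey birthrate and into the external prey flux are indistinguishable.
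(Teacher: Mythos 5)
Your proof is correct and follows essentially the same route as the paper: linearize about the steady state $(1,1)$, observe that the resulting equations coincide exactly with (\ref{eq3.7a}, \ref{eq3.7b}), and invoke Theorem \ref{th1}. Your added remark explaining \emph{why} the coincidence occurs --- that the multiplicative noise $\xi(t)X$ linearizes to $\xi(t)$ precisely because $X^{st}=1$ --- is a nice clarification the paper leaves implicit, but it does not change the argument.
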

	\begin{proof}
		Again, consider the first order approximation for the deviations from steady-state: 
		$X = 1 + \delta X,\,\,Y = 1 + \delta Y,\,\,\,\left| {\delta X} \right| <  < 1,\,\,\left| {\delta Y} \right| <  < 1\,,$
		\begin{align}
		&\frac{{d\delta X}}{{dt}} =  - \delta Y + \xi \left( t \right),\,\,\tag{3.27a}\label{eq3.27a}\\
		&\frac{{d\delta Y}}{{dt}} = \delta X
		\tag{3.27b}\label{eq3.27b}
		\end{align}
		Eqs.(\ref{eq3.27a}, \ref{eq3.27b}) are identical with (\ref{eq3.7a}, \ref{eq3.7b}). Therefore, Theorem \ref{th5} directly follows from Theorem \ref{th1}.
	\end{proof}
	\begin{theorem}\label{th6}
		Linearization of the Lotka-Volterra model with the Langevin noise of external flux of predators,
		\begin{align}
		\frac{{dX}}{{dt}} = X - X \cdot Y,\,\,\frac{{dY}}{{dt}} = X \cdot Y - Y + \xi \left( t \right),\,\, < \xi \left( t \right)\xi \left( {t'} \right) >=\nonumber\\  = {A^2}\delta \left( {t - t'} \right)
		\tag{3.28}
		\end{align}
		provides the parabolic law (\ref{eq3.6}) for the sum of dispersions.
	\end{theorem}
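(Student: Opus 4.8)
The plan is to reduce Theorem~\ref{th6} to the already-proven Theorem~\ref{th4} by linearizing about the steady state, exactly in the spirit of the preceding proofs. First I would switch off the noise ($\xi\equiv 0$) to locate the unperturbed steady state of $dX/dt=X-XY$, $dY/dt=XY-Y$, which is $X^{st}=1$, $Y^{st}=1$. I would then substitute $X=1+\delta X$, $Y=1+\delta Y$ with $|\delta X|\ll 1$ and $|\delta Y|\ll 1$, treating the zero-mean flux $\xi$ as a first-order quantity on the same footing as $\delta X,\delta Y$, and impose the same initial data $\delta X(0)=\delta Y(0)=0$ used throughout.

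Next I would carry out the first-order expansion of the system (3.28). The prey equation is noise-free and, after discarding the second-order product $\delta X\,\delta Y$, collapses to $d\delta X/dt=-\delta Y$. The predator equation $XY-Y+\xi=(1+\delta Y)\,\delta X+\xi$ collapses, again dropping $\delta X\,\delta Y$, to $d\delta Y/dt=\delta X+\xi(t)$; the additive flux survives with coefficient unity because its multiplier in the steady state is $1$. The linearized system is therefore
\begin{align*}
\frac{d\delta X}{dt}=-\delta Y,\qquad \frac{d\delta Y}{dt}=\delta X+\xi(t).
\end{align*}

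The decisive step is then a direct comparison: this pair is term-by-term identical with equations~\eqref{eq3.25a} and~\eqref{eq3.25b} of Theorem~\ref{th4}, sign of $\xi$ included. Since the initial conditions and the noise statistics $\langle\xi(t)\xi(t')\rangle=A^2\delta(t-t')$ coincide, the whole apparatus of Theorem~\ref{th4} transfers verbatim: writing the matrix solution with $\hat M=\bigl(\begin{smallmatrix}0&-1\\1&0\end{smallmatrix}\bigr)$, evaluating the correlation through the boundary delta-function to get $\langle\delta Y(t)\,\xi(t)\rangle=A^2/2$, and adding the two dispersion balances to obtain $\frac{d}{dt}\bigl(\langle(\delta X)^2\rangle+\langle(\delta Y)^2\rangle\bigr)=2\langle\delta Y\,\xi\rangle=A^2$. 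Integrating with the stated initial data yields the parabolic law~\eqref{eq3.6}.

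I do not anticipate a real obstacle; the one point deserving care is the bookkeeping in the linearization, namely confirming that an \emph{additive} external flux of predators here produces exactly the same linear system as the \emph{multiplicative} noise on the reproduction term $XY$ in Theorem~\ref{th4}. This coincidence is not accidental: near the steady state $(1+\xi)XY=XY+\xi\,XY\approx XY+\xi$ because $XY\approx X^{st}Y^{st}=1$, so the multiplicative and additive first-order perturbations agree. Hence Theorem~\ref{th6} reduces to Theorem~\ref{th4} (equivalently to Theorem~\ref{th2} up to the immaterial sign of a symmetric noise), completing the argument.
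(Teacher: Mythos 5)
Your proof is correct and follows essentially the same route as the paper: linearize about the steady state $(1,1)$, obtain $d\delta X/dt=-\delta Y$, $d\delta Y/dt=\delta X+\xi(t)$, observe these coincide with the system (\ref{eq3.25a}, \ref{eq3.25b}) of Theorem~\ref{th4}, and conclude. Your closing remark explaining \emph{why} the additive flux and the multiplicative noise on $XY$ give the same linearization (because $X^{st}Y^{st}=1$) is a nice touch the paper leaves implicit, but the argument itself is the paper's own.
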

	\begin{proof}
		Again, consider the first order approximation for the deviations from steady-state: 
		$X = 1 + \delta X,\,\,Y = 1 + \delta Y,\,\,\,\left| {\delta X} \right| <  < 1,\,\,\left| {\delta Y} \right| <  < 1\,$
		\begin{align}
		&\frac{{d\delta X}}{{dt}} =  - \delta Y,\,\,\tag{3.29a}\label{eq3.29a}\\
		&\frac{{d\delta Y}}{{dt}} = \delta X + \xi \left( t \right)
		\tag{3.29b}\label{eq3.29b}
		\end{align}
		Eqs.(\ref{eq3.29a}, \ref{eq3.29b}) are identical with (\ref{eq3.25a}, \ref{eq3.25b}). Therefore, Theorem \ref{th6} directly follows from Theorem \ref{th4}.
	\end{proof}
	Now we proceed to the noise in LV-systems with non-zero mean external fluxes. Of course, in this case the steady-state reduced values are not equal to 1 anymore, but instead determined by eq. (\ref{eq5}). In this paper we consider only one case - the Lottka-Volterra model with the Langevin noise of the birthrate
	\begin{align}
	&\frac{{dX}}{{dt}} = \left( {1 + \xi \left( t \right)} \right)X - X \cdot Y + {b_x},\,\,\frac{{dY}}{{dt}} = X \cdot Y - Y + {b_y},\,\,\nonumber\\
	& < \xi \left( t \right)\xi \left( {t'} \right) >  = {A^2}\delta \left( {t - t'} \right)
	\tag{3.30}
	\end{align}
	Linearization for first-order deviations from steady-state 
	($X = {X^{st}} + \delta X,\,\,Y = {Y^{st}} + \delta Y,\,\,\,\left| {\delta X} \right| <  < {X^{st}},\,\,\left| {\delta Y} \right| <  < {Y^{st}}\,$), gives
	\begin{align}
	&\frac{{d\delta X}}{{dt}} = \xi \left( t \right){X^{st}} + \left( {1 - {Y^{st}}} \right)\delta X - {X^{st}}\delta Y,\,\,\tag{3.31a}\label{eq3.31a}\\
	&\frac{{d\delta Y}}{{dt}} = {Y^{st}}\delta X + \left( {{X^{st}} - 1} \right)\delta Y
	\tag{3.31b}\label{eq3.31b}
	\end{align}
	We again consider an ensemble with identical initial condition $\delta X\left( {t = 0} \right) = 0,\,\,\delta Y\left( {t = 0} \right) = 0$
	Let us multiply eq. (\ref{eq3.31a}) by $2\delta X$ and average over ensemble:
	\begin{align}
	\frac{{d < {{\left( {\delta X} \right)}^2} > }}{{dt}} =  + 2\left( {1 - {Y^{st}}} \right) < {\left( {\delta X} \right)^2} >  - 2{X^{st}} < \delta X\delta Y >+\nonumber\\  + 2{X^{st}} < \delta X\left( t \right)\xi \left( t \right) > 
	\tag{3.32a}
	\label{eq3.32a}
	\end{align}
	Let us multiply eq. (\ref{eq3.31b}) by $2\delta Y$ and average over ensemble:
	\begin{align}
	\frac{{d < {{\left( {\delta Y} \right)}^2} > }}{{dt}} = 2\left( {{X^{st}} - 1} \right) < {\left( {\delta Y} \right)^2} >  + 2{Y^{st}} < \delta X\delta Y > 
	\tag{3.32b}
	\label{eq3.32b}
	\end{align}
	Let us add the product of eq. (\ref{eq3.31a}) with $2\delta Y$ and the product of eq. (\ref{eq3.31b}) with $2\delta X$ and average over ensemble:
	\begin{align}
	\frac{{d < \delta X\delta Y > }}{{dt}} = {Y^{st}} < {\left( {\delta X} \right)^2} >  - {X^{st}} < {\left( {\delta Y} \right)^2} >+\nonumber\\  + \left( {{X^{st}} + {Y^{st}} - 2} \right) < \delta X\delta Y >  + < \delta Y\left( t \right)\xi \left( t \right) > 
	\tag{3.32c}
	\label{eq3.32c}
	\end{align}
	To make the set (\ref{eq3.32a}, \ref{eq3.32b}, \ref{eq3.32c}) self-consistent, one should find the values of $ < \delta X\left( t \right)\xi \left( t \right) > $, $ < \delta Y\left( t \right)\xi \left( t \right) > $.
	
	For this, first of all, we find the formal solution of set (\ref{eq3.7a}, \ref{eq3.7b}) which we reformulate in the matrix form:
	\begin{align}
	\frac{d}{{dt}}\left( {\begin{array}{*{20}{c}}
		{\delta X}\\
		{\delta Y}
		\end{array}} \right) = \left( {\begin{array}{*{20}{c}}
		{\left( {1 - {Y^{st}}} \right)}&{ - {X^{st}}}\\
		{{Y^{st}}}&{\left( {{X^{st}} - 1} \right)}
		\end{array}} \right)\left( {\begin{array}{*{20}{c}}
		{\delta X}\\
		{\delta Y}
		\end{array}} \right) + \left( {\begin{array}{*{20}{c}}
		\xi \\
		0
		\end{array}} \right) \equiv\nonumber\\
		\equiv \hat M\left( {\begin{array}{*{20}{c}}
		{\delta X}\\
		{\delta Y}
		\end{array}} \right) + \hat f,\nonumber\\
	\delta X\left( {t = 0} \right) = 0,\,\,\delta Y\left( {t = 0} \right) = 0
	\nonumber
	\end{align}
	Solution is:
	\begin{align}
	\left( {\begin{array}{*{20}{c}}
		{\delta X\left( t \right)}\\
		{\delta Y\left( t \right)}
		\end{array}} \right) = \int\limits_0^t {\exp \left( {\left( {t - t'} \right)\hat M} \right)} \left( {\begin{array}{*{20}{c}}
		{\xi \left( {t'} \right)}\\
		0
		\end{array}} \right)dt'
	\nonumber
	\end{align}
	Then we can find two mean values $ < \delta X\left( t \right)\xi \left( t \right) > $ and $ < \delta Y\left( t \right)\xi \left( t \right) > $.
	\begin{align}
	< \left( {\begin{array}{*{20}{c}}
		{\delta X\left( t \right)}\\
		{\delta Y\left( t \right)}
		\end{array}} \right)\xi \left( t \right) >  = \left( {\begin{array}{*{20}{c}}
		{ < \delta X\left( t \right)\xi \left( t \right) > }\\
		{ < \delta Y\left( t \right)\xi \left( t \right) > }
		\end{array}} \right)=\nonumber\\ = \int\limits_0^t {\exp \left( {\left( {t - t'} \right)\hat M} \right)}  < \left( {\begin{array}{*{20}{c}}
		{\xi \left( {t'} \right)}\\
		0
		\end{array}} \right)\xi \left( t \right) > dt' = \nonumber\\
	= \int\limits_0^t {\exp \left( {\left( {t - t'} \right)\hat M} \right)} \left( {\begin{array}{*{20}{c}}
		{{A^2}\delta \left( {t - t'} \right)}\\
		0
		\end{array}} \right)dt' = \left( {\begin{array}{*{20}{c}}
		{{A^2}/2}\\
		0
		\end{array}} \right)
	\tag{3.33}
	\end{align}
	Thus, the set of equations (\ref{eq3.32a}, \ref{eq3.32b}, \ref{eq3.32c}) becomes self-consistent:
	\begin{align}
	&\frac{d}{{dt}}\left( {\begin{array}{*{20}{c}}
		{ < {{\left( {\delta X} \right)}^2} > }\\
		{ < {{\left( {\delta Y} \right)}^2} > }\\
		{ < \left( {\delta X\delta Y} \right) > }
		\end{array}} \right)=\nonumber\\
		& = \left( {\begin{array}{*{20}{c}}
		{2\left( {1 - {Y^{st}}} \right)}&0&{ - 2{X^{st}}}\\
		0&{2\left( {{X^{st}} - 1} \right)}&{2{Y^{st}}}\\
		{{Y^{st}}}&{ - {X^{st}}}&{{X^{st}} + {Y^{st}} - 2}
		\end{array}} \right)\left( {\begin{array}{*{20}{c}}
		{ < {{\left( {\delta X} \right)}^2} > }\\
		{ < {{\left( {\delta Y} \right)}^2} > }\\
		{ < \left( {\delta X\delta Y} \right) > }
		\end{array}} \right) +\nonumber\\
		&+ \left( {\begin{array}{*{20}{c}}
		{2{X^{st}}{A^2}/2}\\
		0\\
		0
		\end{array}} \right)
	\tag{3.34}
	\label{eq3.34}
	\end{align}
	The formal solution (in matrix form) of eq. (\ref{eq3.34}) is:
	\begin{align}
	\hat \psi \left( t \right) = \int\limits_0^t {\exp \left( {\left( {t - t'} \right)\hat L} \right)} \hat \varphi \left( {t'} \right)dt',\,\,\hat \psi \left( {t = 0} \right) = 0
	\tag{3.35}
	\end{align}
	where
	\begin{align}
	\hat \psi \left( t \right) = \left( {\begin{array}{*{20}{c}}
		{ < {{\left( {\delta X} \right)}^2} > }\\
		{ < {{\left( {\delta Y} \right)}^2} > }\\
		{ < \left( {\delta X\delta Y} \right) > }
		\end{array}} \right),\nonumber\\
		\hat L = \left( {\begin{array}{*{20}{c}}
		{2\left( {1 - {Y^{st}}} \right)}&0&{ - 2{X^{st}}}\\
		0&{2\left( {{X^{st}} - 1} \right)}&{2{Y^{st}}}\\
		{{Y^{st}}}&{ - {X^{st}}}&{{X^{st}} + {Y^{st}} - 2}
		\end{array}} \right),\nonumber\\
		\hat \varphi  = \left( {\begin{array}{*{20}{c}}
		{{X^{st}}{A^2}}\\
		0\\
		0
		\end{array}} \right)
	\tag{3.36}
	\end{align}
	
	If the steady-state solution satisfies the local stability criterion (${\mathop{\rm Im}\nolimits} {\lambda _1} < 0,\,{\mathop{\rm Im}\nolimits} {\lambda _2} < 0$ in eqs. (\ref{eq2.4})), then the set (\ref{eq3.34}) should describe the competition between dynamical tendency of attraction to the metastable steady-state and stochastic tendency of migration from this steady state. It is physically evident that sooner or later the stochastic (noise) will bring the system beyond the limits of metastability (beyond the convergence region). It is also evident that the “Mean Time To Failure” (MTTF) of the LV-system should depend on the noise amplitude. This problem will be considered elsewhere. (See also \cite{lit10}.)
	\section*{Conclusion}
	\begin{enumerate}
	\item 	Regular hunting of preys without troubling predators destabilizes the LV-system.
	\item 	Regular hunting of predators without troubling preys destabilizes the LV-system.
	\item 	Regular adding of preys always stabilizes the LV-system.
	\item	Regular adding of predators stabilizes the LV-system only if adding rate less than some threshold.
	\item Diagram of regimes for LV-system with regular external fluxes is described by Fig.\ref{image3}.
	\item	Simultaneous adding of preys and limited hunting of predators may leave the system metastable but only within some critical region of initial parameters.
	\item	In numeric modeling, to make the impact of the noise of kinetic parameters independent on the choice of time step, the random perturbation kinetic coefficients should be proportional to noise amplitude and inversely proportional to the square root of the time step.
	\item	Mean square distance from stationary point (at least in case of small deviations from steady-state) increases proportionally to time with proportionality coefficient proportional to the noise amplitude:$<(\Delta{X})^{2}+(\Delta{Y})^{2}>={A}^{2}t$
	\item	Competition between noise and metastability under non-zero external fluxes (including MTTF) will be analyzed elsewhere.
	\end{enumerate}

	\bigskip
	
	CONTACT INFORMATION
	
	\medskip
	Yaroslav Huriev - third-year student of  
	Educational-Scientific Institute of
	Informational and Eduational Technologies,
	The Bohdan Khmelnytsky National University of Cherkasy\\
	
	Andriy Gusak - leading researcher of the Laboratory of Mathematical Physics of the Bohdan Khmelnytsky National University of Cherkasy/Institute of Applied Mathematics and Mechanics, professor of the physics department of the Bohdan Khmelnytsky National University of Cherkasy, doctor of physics and mathematics sciences\\ \\
	Laboratory of Mathematical Physics,\\
	(joint lab. of Cherkasy National University and 
	Institute of Applied Mathematics and Mechanics)\\
	81 Shevchenko blvd.,\\
	18000 Cherkasy,\\ 
	Ukraine
	\\
	E-mail: \href{mailto:yaroslavhuriev@gmail.com}{yaroslavhuriev@gmail.com},\\
	\href{mailto:amgusak@ukr.net}{amgusak@ukr.net}
	
\end{document}